\newtheorem{proposition}{Proposition}
\newcommand{\half}{\frac{1}{2}}
\title[Determinant Representations for Garvan Formulas]{Determinant Representations for Garvan Formulas}
\author{D. Levin}
\address{Mathematical Institute, University of Oxford, Andrew Wiles Building, Radcliffe Observatory Quarter (550), Woodstock Road, Oxford, OX2 6GG, United Kingdom (Currently working in Israel)}
\email{levindanie@gmail.com}
\author{H.-G. Shin}
\address{Faculty of Mathematics and Physics, Charles University, Sokolovsk{\'a} 83, Prague, Czech Republic}
\email{shgmath@gmail.com}
\author{A. Zuevsky}
\address{Institute of Mathematics, Czech Academy of Sciences, Prague, Czech Republic}
\email{zuevsky@yahoo.com}
\subjclass[2010]{17B69, 30F10, 32A25, 11F03}
\keywords{Modular discriminant; deformed Eisenstein series; determinant formulas}
\begin{document}

\begin{abstract}
In this note, we demonstrate how determinant representations for correlation functions in conformal field theory can be used to derive explicit determinant formulas for powers of the classical $\eta$-function, expressed via deformed elliptic functions with parameters. In particular, we obtain counterparts of Garvan's formulas for the modular discriminant corresponding to the genus two Riemann surface case.
\end{abstract}

\maketitle

\section*{Declarations}
The authors state that:
\begin{enumerate}
    \item The paper does not contain any potential conflicts of interest.
    \item The paper does not use any datasets. No datasets were generated during and/or analysed during the current study.
    \item The paper includes all data generated or analysed during this study.
    \item Data sharing is not applicable to this article as no datasets were generated or analysed during the current study.
    \item The data of the paper can be shared openly.
    \item No AI was used to write this paper.
\end{enumerate}

\section{Introduction}

Many identities in number theory originate from computations of correlation functions in conformal field theory \cite{DFG}, expressed through the language of vertex algebras \cite{Kac, Zhu}. In particular, one finds relations among modular forms, fundamental kernels, and $q$-series from the comparison of bosonic and fermionic pictures. The most interesting example in this direction is represented by the twisted counterpart of the Jacobi triple identity \cite{Kac}.

In computations of vertex operator algebra (VOA) correlation functions on the torus, classical Weierstrass functions and Eisenstein series occur naturally. Their two-parameter natural generalizations were introduced in \cite{DLM} and further developed in \cite{Mason-Tuite-Z}. For higher correlation functions on a genus one Riemann surface, computations involve elliptic versions of Fay's trisecant identity \cite{Fay}, known from algebraic geometry. Various identities for powers of the $\eta(\tau)$-function appear naturally in studies of affine Lie algebras \cite{Kac}.

In the genus two case, the bosonization procedure provides us with genus two counterparts of Jacobi triple identities in terms of determinants of reproduction kernels corresponding to the type of vertex operator algebra used.

In this review note, using the relations mentioned above, we derive explicit formulas for the modular discriminant, generalizing the Garvan identity for elliptic Eisenstein series to the case of genus two.

\subsection{Torus correlation functions}
For an automorphism $g$ twisted module $\mathcal V$ for a vertex operator superalgebra $V$, we find closed formulas for correlation functions of vertex operators ${\mathcal Y}$ on a cylinder, $q = e^{2 \pi i \tau}$, with local coordinates $z_i$, $v_i \in V$, $1 \le i \le n$:
\[
Z^{(1)}_V\left[
\begin{array}{c}
f \\
g
\end{array}
\right](v_1, z_1, \ldots, v_n, z_n; q) =
{\rm STr}_{\mathcal V} \left (f{\mathcal Y} (v_1, z_1) \ldots {\mathcal{Y}} (v_n, z_n) \;
q^{L(0)-C/24} \right),
\]
where $L(0)$ is the Virasoro algebra generator, and $C$ is the central charge. The formal parameter is associated with a complex parameter on the torus. Final expressions are given by determinants of matrices with elements being coefficients in the expansions of the regular parts of corresponding differentials: Bergman (bosons) or Szeg\H{o} (fermions) kernels \cite{Mason-Tuite-Z}.

In this note, we derive explicit genus two generalizations of the fundamental formulas for powers of the $\eta$-function in terms of deformed versions \cite{DLM} of Weierstrass functions and Eisenstein series. In particular, we find that powers of the modular discriminant are expressed (up to theta-function multipliers) via determinants of finite matrices containing combinations of deformed modular functions. In the proof, we use the generalized elliptic version of Fay's trisecant identity for a vertex operator superalgebra.

\section{Modular Discriminant and Eisenstein Series}

The modular discriminant is defined by $\Delta(\tau)= \eta(\tau)^{24}$, where $\eta(\tau)$ is the Dedekind eta-function, $q = e^{2 \pi i \tau}$, and $\eta (\tau )=q^{1/24}\prod_{n=1}^\infty (1-q^n)$.

Recall that the Eisenstein series \cite{Ramanujan, Serre} $E^{(1)}_n(\tau)$ is equal to $0$ for $n$ odd, and for $n$ even:
\begin{equation}
\label{eq:Eisenstein_Def}
E^{(1)}_n(\tau)=-\frac{B_n(0)}{n!}+\frac{2}{(n-1)!}
\sum\limits_{r\geq 1}\frac{r^{n-1}q^r}{1-q^r},
\end{equation}
where $B_n(0)$ is the $n$-th Bernoulli number, defined by:
\begin{equation*}
\frac{q_z^{\lambda }}{q_z-1}=\frac{1}{z}+\sum\limits_{n\geq 1}
\frac{B_n(\lambda )}{n!}z^{n-1}.
\end{equation*}
Note that the superscript $(1)$ in the definition of $E^{(1)}_n(\tau)$ denotes the genus one (elliptic) case of higher genus generalizations of the Eisenstein series.

One finds \cite{Ramanujan} the relations:
$E^{(1)}_8 = \left(E^{(1)}_4(\tau)\right)^2$,
$E^{(1)}_{10}(\tau) = E^{(1)}_4(\tau) E^{(1)}_6(\tau)$, and
$E^{(1)}_{12}(\tau) = \tfrac{441}{691}\left(E^{(1)}_4(\tau)\right)^3 + \tfrac{250}{691}\left(E^{(1)}_6(\tau)\right)^2$.

\subsection{Classical Garvan formula}
The fundamental classical formulas for the modular discriminant follow:
\[
\eta^{24}(\tau)=\frac{1}{1728} \left( \left(E_4(\tau)\right)^3
-\left(E_6(\tau)\right)^2 \right)
=\frac{1}{1728} \det
\left[
\begin{array}{cc}
E_4(\tau) & E_6(\tau) \\
E_6(\tau) & E_8(\tau)
\end{array}
\right].
\]
Note the difference in normalization. The Eisenstein elements $E^{(1)}_n$ from definition \eqref{eq:Eisenstein_Def} satisfy $E^{(1)}_n(q = 0) = - \frac{B_n(0)}{n!}$, whereas the given Ramanujan identities, the formula for $\Delta (\tau) = \eta(\tau)^{24} = q - 24q^2 + \ldots$, and the Garvan formula all employ the more commonly used normalization $E_n(q = 0) = 1$. For the more common normalization, we use $E_n$ without a superscript.

Indeed, the fact that the Garvan $3 \times 3$ determinant is a cusp form with leading $O(q^2)$ expansion follows directly from the $E_n(q = 0) = 1$ normalization. Similar remarks apply to the appearance of cusp forms in all generalized Garvan identities of Milne \cite{Milne}.

The next formula is due to F. Garvan \cite{garvan}:
\begin{equation}
\label{eq:Garvan_3x3}
\eta(\tau)^{48}(\tau)=-\frac{691}{250 \; (1728)^2 } \;
\det
\left[
\begin{array}{ccc}
E_4(\tau) & E_6(\tau) & E_8 (\tau) \\
E_6(\tau) & E_8(\tau) & E_{10}(\tau) \\
E_8(\tau) & E_{10}(\tau) & E_{12}(\tau)
\end{array}
\right].
\end{equation}
This formula has been generalized in \cite{Milne}, where higher powers of the modular discriminant were obtained as determinants of certain finite-dimensional matrices multiplied by combinations of $E_n(\tau)$ elements for various $n$.

Note that those formulas are not in the cleanest form, unlike \eqref{eq:Garvan_3x3}. This fact hints that the basis of ordinary Eisenstein elements is not necessarily suitable to express powers of either the modular discriminant or powers of the $\eta$-function in the most natural way.

In contrast, our formulas for powers of the $\eta$-function given later in this review are expressed in terms of deformed Eisenstein series \cite{DLM} in the clearest possible form (i.e., not containing products of Eisenstein elements and determinants). Therefore, although our formulas use a different set of Eisenstein series elements, the overall idea of the identities is quite close to the Garvan-Milne formulas.

In this review, we give various generalizations for higher powers of the modular discriminant computed as a determinant of matrices containing deformed Weierstrass functions \cite{DLM}.

\section{The Generalized Garvan Formulas}

Computations of the twisted partition function $Z_V^{(1)}\left[ \begin{smallmatrix} f \\ g \end{smallmatrix} \right](\tau)$ for the free fermion even rank vertex operator superalgebra, where $f, g$ are inner automorphisms generated by a Heisenberg element of $V$, lead to two alternative expressions (see, e.g., \cite{Kac}) as an expansion over a basis. First:
\begin{equation*}
Z_V^{(1)}\left[
\begin{array}{c}
f \\
g
\end{array}
\right] (\tau )=
q^{\kappa^2/2-1/24}\prod_{l\geq 1}(1-\theta^{-1}
q^{l-\frac{1}{2}-\kappa })(1-\theta q^{l-\frac{1}{2}+\kappa}),
\end{equation*}
and second, in terms of torus theta series with characteristics:
\begin{equation*}
Z_V^{(1)}\left[
\begin{array}{c}
f \\
g
\end{array}
\right] (\tau )
=
\frac{e^{2\pi i(\alpha +1/2)(\beta +1/2)}}{\eta (\tau )}\vartheta^{(1)} \left[
\begin{array}{c}
\frac{1}{2} - \beta \\
\frac{1}{2}+ \alpha
\end{array}
\right] (0,\tau ),
\end{equation*}
where the theta series is defined as:
\begin{equation*}
\vartheta^{(1)} \left[
\begin{array}{c}
a \\
b
\end{array}
\right] (z,\tau )=\sum\limits_{n\in \mathbb{Z}}
\exp \left(i\pi (n+a)^{2}\tau+(n+a)(z+2\pi ib)\right).
\end{equation*}
Here we define $f=e^{2\pi i\alpha a(0)}$ and $g=e^{2\pi i\beta a(0)}$ with parameters $\alpha, \beta \in \mathbb{R}$, where $a(0)$ is the zero mode of a Heisenberg subalgebra in the rank two free fermionic vertex operator superalgebra \cite{Mason-Tuite-Z}. We also define $\phi =e^{-2\pi i\beta }$ and $\theta =e^{-2\pi i\alpha}$.
Note that $Z_V^{(1)}\left[ \begin{smallmatrix} f \\ g \end{smallmatrix} \right] (\tau )=0$ for $(\theta, \phi )=(1,1)$, i.e., $(\alpha, \beta ) \equiv (0,0) \pmod{\mathbb{Z}}$.

Comparing the two representations, we obtain the Jacobi triple product formula \cite{Kac}, which can be rewritten in the form:
\begin{equation*}
\eta (\tau )
=
\frac{
q^{-\kappa^2/2+1/24}  e^{2\pi i(\alpha +1/2)(\beta +1/2)} \;
\vartheta^{(1)} \left[
\begin{array}{c}
\frac{1}{2} - \beta \\
\frac{1}{2} + \alpha
\end{array}
\right] (0,\tau)} {\det \left(I-T^{(1)}\right)},
\end{equation*}
where the determinant corresponding to sphere self-sewing to form a torus \cite{Tuite-Z} is:
\[
\det \left(I-T^{(1)}\right)
= \prod_{l\geq 1} \left(1-\theta ^{-1}q^{l-\frac{1}{2}-\kappa }\right)
\left(1-\theta q^{l- \frac{1}{2}+\kappa }\right).
\]
Thus we get the identity for the first power of the $\eta$-function.

In \cite{DLM}, the deformed Weierstrass functions (which can be expressed via deformed Eisenstein series) were defined and studied:
\begin{equation}
\label{merda}
P_1^{(1)}\left[
\begin{array}{c}
\theta \\
\phi
\end{array}
\right] (z,\tau )=- \sideset{}{'}\sum\limits_{n\in \mathbb{Z}
+\lambda}\frac{q_z^n}{1-\theta^{-1}q^n}=
\frac{1}{z}- \sum\limits_{n\geq 1} \frac{1}{n}
E^{(1)}_n \left[ {
\begin{array}{c}
\theta \\
\phi
\end{array}
}\right] (\tau )z^{n-1},
\end{equation}
for $q=e^{2\pi i\tau }$, where $\sum^{\prime }$ means we omit $n=0$ if $(\theta, \phi)=(1,1)$, and:
\begin{align*}
E^{(1)}_n\left[ {
\begin{array}{c}
\theta \\
\phi
\end{array}
}\right] (\tau ) &=-\frac{B_n(\lambda )}{n!}+\frac{1}{(n-1)!}
\sideset{}{'}\sum\limits_{r\geq 0}\frac{(r+\lambda)^{n-1}\theta
^{-1}q^{r+\lambda}}{1-\theta^{-1}q^{r+\lambda }}  \notag \\
&\quad +\frac{(-1)^{n}}{(n-1)!}\sum\limits_{r\geq 1}\frac{(r-\lambda)^{n-1}
\theta q^{r-\lambda }}{1-\theta q^{r-\lambda}}.
\end{align*}

Additionally, for integers $m_i, n_j \geq 0$ satisfying $\sum_{i=1}^r m_i = \sum_{j=1}^s n_j$, let us introduce the notation:
\begin{align*}
& \Theta^{(1)}_{r, s,  (m_i, n_i) }(x, y, \tau)
\\
& =
\frac{ \prod\limits_{1\leq i\leq r,1\leq j\leq s} \vartheta^{(1)}\left[
\begin{array}{c}
\frac{1}{2} \\
\frac{1}{2}
\end{array}
\right] (x_i-y_j,\tau )^{m_i n_j}}
{ \prod\limits_{1\leq i<k\leq r}\vartheta^{(1)}\left[
\begin{array}{c}
\frac{1}{2} \\
\frac{1}{2}
\end{array}
\right](x_i-x_k, \tau)^{m_i m_k}
\prod\limits_{1\leq j<l\leq s}\vartheta^{(1)}\left[
\begin{array}{c}
\frac{1}{2} \\
\frac{1}{2}
\end{array}
\right](y_j-y_l,\tau )^{n_j n_l} }.
\end{align*}

Let us introduce ${\bf P}_n(\theta, \phi)$, an $n\times n$ matrix for $1\leq i, j\leq n$, with ${\bf x}=\left(x_1, \ldots, x_n \right)$ and ${\bf y}=\left(y_1, \ldots, y_n \right)$:
\begin{equation}
\label{pmatrico}
S^{(1)}({\bf x}, {\bf y})={\bf P}_n (\theta, \phi)=\left[P^{(1)}_1 \left[
\begin{array}{c}
\theta \\
\phi
\end{array}
\right] (x_i-y_j,\tau )\right],
\end{equation}
and another $(n+1)\times (n+1)$ matrix ${\bf Q}_n(\tau)$:
\begin{equation}
{\bf Q}_n(\tau)=\left[
\begin{array}{cc}
{\bf P}_n(1,1) & \begin{matrix} 1 \\ \vdots \\ 1 \end{matrix}
\\
1 \ldots 1 & 0
\end{array}
\right]
=\left[
\begin{array}{cccc}
P^{(1)}_1(x_1-y_1, \tau) & \ldots & P^{(1)}_1(x_1-y_n,\tau ) & 1 \\
\vdots & \ddots &  & \vdots \\
P^{(1)}_1(x_n-y_1, \tau) &  & P^{(1)}_1(x_n-y_n,\tau ) & 1 \\
1 & \ldots & 1 & 0
\end{array}
\right].
\end{equation}

\begin{proposition}
Generalizing Garvan's formula, for $(\theta, \phi) \ne (1,1)$, one has:
\begin{equation}
\label{GenGarvan_1}
\eta(\tau)^{24n}=
- \frac{
\vartheta^{(1)} \left[
\begin{array}{c}
\frac{1}{2} \\
\frac{1}{2}
\end{array}
\right] (0,\tau )
\;
\Theta^{(1)}_{8n, 8n,  (1, 1)}(x, y,  \tau)
} {
\vartheta^{(1)} \left[
\begin{array}{c}
\frac{1}{2}-\beta \\
\frac{1}{2}+\alpha
\end{array}
\right] \left(\sum\limits_{i=1}^{8n}(x_i-y_i), \tau \right)
} \;
\mathrm{\det } \; {\bf P}_{8n} (\theta, \phi),
\end{equation}

\begin{equation}
\eta(\tau)^{24n} =
- \frac{
\vartheta^{(1)} \left[
\begin{array}{c}
\frac{1}{2} \\
\frac{1}{2}
\end{array}
\right] (0,\tau )
\;
\Theta^{(1)}_{8n, 8n, (1, 1)}(x, y, \tau)
} {
\vartheta^{(1)} \left[
\begin{array}{c}
\frac{1}{2}-\beta \\
\frac{1}{2}+\alpha
\end{array}
\right] \left(\sum\limits_{i=1}^{8n}(x_i-y_i), \tau \right)
} \;
\det
\left[
\frac{1}{x_i - y_j} - \sum\limits_{k\geq 1} \frac{1}{k}
E^{(1)}_k \left[ {
\begin{array}{c}
\theta \\
\phi
\end{array}
}\right] (\tau ) (x_i - y_j)^{k-1}
\right]_{1 \le i, j \le 8n}
\end{equation}

and for $(\theta, \phi)=(1,1)$:
\begin{equation}
\label{GenGarvan_2}
\eta(\tau)^{24n}=
i \;
\frac{ \Theta^{(1)}_{8n+1, 8n+1, (1, 1)}(x, y,  \tau) }{
\vartheta^{(1)} \left[
\begin{array}{c}
\frac{1}{2} \\
\frac{1}{2}
\end{array}
\right] \left(\sum\limits_{i=1}^{8n+1}(x_i-y_i), \tau \right) } \;
\mathrm{\det } \;
{\bf Q}_{8n+1}.
\end{equation}
\end{proposition}

Formulas \eqref{GenGarvan_1}--\eqref{GenGarvan_2} can also be expressed in terms of deformed Eisenstein series by substituting the definition of $P^{(1)}_1\left[ \begin{smallmatrix} \theta \\ \phi \end{smallmatrix} \right](z, \tau)$ in terms of $E^{(1)}_n\left[ \begin{smallmatrix} \theta \\ \phi \end{smallmatrix} \right](\tau)$, leading to a quite involved formula which we do not give here.

Let us give a proof. Recall the genus one prime form \cite{Mumford, Fay}:
\[
K^{(1)}(x-y, \tau) =\frac{ \vartheta^{(1)} \left[ {{\theta} \atop {\phi}} \right]
\left( \int_y^x\nu, \tau \right) }
{\zeta(x)^\half \zeta(y)^\half},
\]
where 
$\zeta^{(1)}(x) = \sum_{i=1}^g \partial_{z_i}\vartheta^{(1)} 
\left[ { \gamma} \atop {\delta} \right] (0,  \Omega)\nu^{(1)}_i(x)$, 
 which gives:
$K^{(1)}(z, \tau)
= - \frac{i}{\eta^3(\tau)}
\vartheta^{(1)}  \left[
\begin{array}{c}
\frac{1}{2} \\
\frac{1}{2}
\end{array}
\right] (z, \tau)$.  
In particular, for $\alpha=\beta=1/2$, one has
$Z^{(1)} \left[{f_{1/2} \atop g_{1/2}} \right] \left( \tau \right) 
= K^{(1)}(z, \tau)/\eta^2(\tau)$. 

In \cite{Mason-Tuite-Z}, the elliptic function version of Fay's generalized trisecant 
identity (which originally belonged to Frobenius) was rederived (see also \cite{Fay}). 
Introduce:
\[
K^{(1)}_n({\bf x}, {\bf y})=
\frac{\prod\limits_{1\leq i<j\leq n} K^{(1)}(x_i-x_j, \tau)
K^{(1)}(y_i-y_j, \tau)}{\prod\limits_{1\leq i,j\leq n}K^{(1)}(x_i-y_j,\tau)}.
\]
For $(\theta, \phi )\neq (1,1)$, one has:
\begin{eqnarray*}
&&
\mathrm{\det } \;
{\bf P}_n (\theta, \phi)
=\frac{\vartheta^{(1)} \left[
\begin{array}{c}
\frac{1}{2}-\beta \\
\frac{1}{2}+\alpha
\end{array}
\right] \left(\sum\limits_{i=1}^n(x_i-y_i), \tau \right)}{\vartheta^{(1)} \left[
\begin{array}{c}
\frac{1}{2}-\beta \\
\frac{1}{2}+\alpha
\end{array}
\right] (0,\tau )}
K^{(1)}_n({\bf x}, {\bf y}),
\end{eqnarray*}
and similarly for $(\theta, \phi)=(1,1)$:
\begin{equation*}
\det \;
{\bf Q}_n =
- K^{(1)} \left(\sum\limits_{i=1}^n(x_i-y_i), \tau\right)
K^{(1)}_n({\bf x}, {\bf y}).
\end{equation*}
The Proposition then follows.

There exists also the analytic expansion for $k, l\geq 1$:
\begin{align*}
P^{(1)}_1\left[ {
\begin{array}{c}
\theta \\
\phi
\end{array}
}\right] (z+z_1-z_2, \tau) &= \sum_{k, l\geq 1} D^{(1)}\left[ {
\begin{array}{c}
\theta \\
\phi
\end{array}
}\right] (k,l, z,\tau) z_1^{k-1}z_2^{l-1}, \\
D^{(1)}\left[ {
\begin{array}{c}
\theta \\
\phi
\end{array}
}\right] (k,l, z, \tau) &=(-1)^{k+1}
\binom{k+l-2}{k-1}
P^{(1)}_{k+l-1}\left[ {
\begin{array}{c}
\theta \\
\phi
\end{array}
}\right] (\tau, z).
\end{align*}
Introduce the block matrix:
\[
{\bf D}_{r,s}=\left[
\begin{array}{ccc}
{\bf  D}_{(11)} & \ldots & {\bf  D}_{(1s)} \\
\vdots & \ddots & \vdots \\
{\bf  D}_{(r1)} & \ldots & {\bf  D}_{(rs)}
\end{array}
\right],
\]
with ${\bf D}^{(ab)}$ the $m_a\times n_b$ matrix defined by
${\bf D}_{(ab)}(i,j)=\left[ D^{(1)}\left[ \begin{smallmatrix} \theta \\ \phi \end{smallmatrix} \right] (i,j, x_a-y_b, \tau)\right]$,
for $(1\leq i\leq m_a, 1\leq j\leq n_b)$, $1\leq a\leq r$, and $1\leq b\leq s$.

Using the full version of Fay's generalized trisecant identity \cite{Mason-Tuite-Z}, we derive the following:

\begin{proposition}
For $(\theta, \phi) \ne (1,1)$ and $\zeta=8\Phi$:
\begin{equation*}
\eta(\tau)^{24\zeta}
=
(-i)^{\Phi/24} \;
\frac{
\vartheta^{(1)}
\left[
\begin{array}{c}
\frac{1}{2}-\beta \\
\frac{1}{2}+\alpha
\end{array}
\right] (0,\tau ) \; \Theta^{(1)}_{r, s, (m, n)} (x, y, \tau)}
{\vartheta^{(1)} \left[
\begin{array}{c}
\frac{1}{2}-\beta \\
\frac{1}{2}+\alpha
\end{array}
\right] \left(\sum\limits_{i=1}^r m_i x_i -\sum\limits_{j=1}^s n_j y_j, \tau \right)
}
\;
\det \; {\bf  D}_{r,s},
\end{equation*}
where $\Phi= \sum_{1 \le i \le r,  1 \le k \le s} m_i n_j - \sum_{1 \le i  <k \le r} m_i m_k - \sum_{1 \le j < l\le s} n_j n_l$.
\end{proposition}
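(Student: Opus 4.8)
The plan is to obtain this Proposition as the \emph{confluent} (higher–multiplicity) analogue of the computation that proved the previous Proposition. The engine is again the elliptic version of Fay's generalized trisecant identity, but now in its full form, which evaluates the block determinant $\det\mathbf{D}_{r,s}$ rather than $\det\mathbf{P}_n(\theta,\phi)$. The overall strategy is therefore: (i) invoke the full Fay identity to write $\det\mathbf{D}_{r,s}$ as a theta quotient times a confluent prime-form kernel; (ii) substitute the evaluation of $K^{(1)}$ in terms of $\vartheta^{(1)}$ and $\eta$; (iii) collect the resulting powers to produce $\Delta^{\zeta}$.

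First I would recall the full version of Fay's identity from \cite{Mason-Tuite-Z}, which I may assume. It states
\[
\det\mathbf{D}_{r,s}
=\frac{\vartheta^{(1)}\left[\begin{array}{c}\frac{1}{2}-\beta\\ \frac{1}{2}+\alpha\end{array}\right]\!\left(\sum_i m_i x_i-\sum_j n_j y_j,\tau\right)}
{\vartheta^{(1)}\left[\begin{array}{c}\frac{1}{2}-\beta\\ \frac{1}{2}+\alpha\end{array}\right](0,\tau)}\;
\widetilde{K}^{(1)}_{r,s}(\mathbf{x},\mathbf{y}),
\]
where $\widetilde{K}^{(1)}_{r,s}$ is the confluent prime-form kernel, i.e. the product of factors $K^{(1)}(x_i-x_k,\tau)$, $K^{(1)}(y_j-y_l,\tau)$ and $K^{(1)}(x_i-y_j,\tau)$ now carrying the exponents $m_im_k$, $n_jn_l$ and $m_in_j$ dictated by the cluster multiplicities. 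I would justify that this is the correct object by viewing $\mathbf{D}_{r,s}$ as a coincidence limit of $\mathbf{P}_N(\theta,\phi)$ with $N=\sum_i m_i=\sum_j n_j$: one groups the $x$-arguments into $r$ clusters of sizes $m_i$ and the $y$-arguments into $s$ clusters of sizes $n_j$ and lets the points in each cluster coalesce. To keep the determinant nondegenerate the coincident rows and columns are replaced by their successive $z_1$- and $z_2$-derivatives, and by the analytic expansion $P^{(1)}_1\!\left[{\theta\atop\phi}\right]\!(z+z_1-z_2,\tau)=\sum_{k,l\ge1}D^{(1)}(k,l,z,\tau)\,z_1^{k-1}z_2^{l-1}$ these are precisely the entries $D^{(1)}(i,j,x_a-y_b,\tau)$ of the blocks $\mathbf{D}_{(ab)}$, the coefficients $(-1)^{k+1}\binom{k+l-2}{k-1}$ being already absorbed into $D^{(1)}$.

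To finish, I would proceed exactly as in the previous Proposition and substitute $K^{(1)}(z,\tau)=-\tfrac{i}{\eta^{3}(\tau)}\,\vartheta^{(1)}\!\left[{1/2\atop 1/2}\right]\!(z,\tau)$ into $\widetilde{K}^{(1)}_{r,s}$. The $\vartheta^{(1)}\!\left[{1/2\atop 1/2}\right]$ factors reassemble, by definition, into $\Theta^{(1)}_{r,s,(m,n)}(x,y,\tau)$, while the accumulated constants and powers of $\eta$ combine, through $\Delta=\eta^{24}$, into a single prefactor $(-i)^{\Phi/24}$ and the power $\Delta^{8\Phi}$, the net exponent being controlled by $\Phi=\sum m_in_j-\sum_{i<k}m_im_k-\sum_{j<l}n_jn_l$. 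Rearranging the Fay identity above so as to isolate $\Delta^{\zeta}$ on the left then reproduces the stated theta-quotient (with $\vartheta^{(1)}\!\left[{1/2-\beta\atop 1/2+\alpha}\right](0,\tau)$ in the numerator and the shifted-argument theta in the denominator) multiplying $\Theta^{(1)}_{r,s,(m,n)}\,\det\mathbf{D}_{r,s}$, which is the claim.

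The main obstacle is the coincidence-limit step used to establish the confluent Fay identity with exactly the block structure of $\mathbf{D}_{r,s}$: one must check that the Wronskian-type normalisation factors produced by coalescing the intra-cluster points cancel precisely between the determinant side and the kernel side, and that the binomial and sign coefficients line up with the definition of $D^{(1)}$. Once that identity is in hand, the remainder is routine bookkeeping of theta products together with the powers of $\eta$ and of $-i$.
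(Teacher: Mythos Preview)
Your proposal is correct and follows essentially the same route as the paper: the paper simply says ``Using the full version of the Fay's generalized trisecant identity \cite{Mason-Tuite-Z}, we derive the following,'' and your plan does exactly that---invoke the full (confluent) Fay identity from \cite{Mason-Tuite-Z} to evaluate $\det\mathbf{D}_{r,s}$, then substitute $K^{(1)}(z,\tau)=-\tfrac{i}{\eta^{3}(\tau)}\vartheta^{(1)}\!\left[\begin{smallmatrix}1/2\\1/2\end{smallmatrix}\right]\!(z,\tau)$ and collect the resulting theta and $\eta$ powers, just as in the proof of the previous Proposition. Your additional discussion of the coincidence-limit interpretation of $\mathbf{D}_{r,s}$ as a degeneration of $\mathbf{P}_N(\theta,\phi)$ is a useful elaboration, but it is not a different method.
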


\section{Genus Two Formulas}

In \cite{TZ1}, we derived the genus two counterpart of the triple Jacobi identity by comparing the rank two fermion partition function on a genus two Riemann surface:
\begin{equation*}
Z^{(2)}\left[{f \atop g} \right](\tau_1, \tau_2, \epsilon)=
Z^{(1)}\left[{f_1 \atop g_1} \right](\tau_1) \;
Z^{(1)}\left[{f_2 \atop g_2} \right] (\tau_2) \; \det  \left(I - Q^{(1)} \right)^{1/2},
\end{equation*}
with its bosonized version:
\[
Z^{(2)}_M\left[{f \atop g} \right](\tau_1, \tau_2, \epsilon)
=\frac{\vartheta^{(2)}\left[ {\alpha \atop \beta}\right] \left(\Omega^{(2)}\right)}
{\eta(\tau_1) \eta(\tau_2) \det \left(I-A_1A_2 \right)^{1/2}},
\]
with column vectors $\alpha=(\alpha_1, \alpha_2)^t$ and $\beta=(\beta_1, \beta_2)^t$. Here for $a=1, 2$:
\[
Q^{(1)}=\left(
\begin{array}{cc}
0 & \xi F^{(1)}_1 \left[ {\theta_1}  \atop {\phi_1} \right] \\
- \xi F^{(1)}_2 \left[ {\theta_2}  \atop {\phi_2} \right] & 0
\end{array}
\right),
\]
\[
F^{(1)}_a \left[ {\theta_a}  \atop {\phi_a} \right](k,l,\tau_a, \epsilon)
=(-1)^l \epsilon^{\frac{1}{2}(k+l-1)}
\binom{k+l-2}{k-1}E^{(1)}_{k+l-1}\left[{\theta_a \atop \phi_a}\right] (\tau_a),
\]
\begin{equation*}
A^{(1)}_a(k,l, \tau_a, \epsilon)
=\epsilon^{(k+l)/2}\frac{(-1)^{k+1}(k+l-1)!}{\sqrt{kl}(k-1)!(l-1)!}
E^{(1)}_{k+l}(\tau_a),
\end{equation*}
which gives for $\tau=\tau_1=\tau_2$, $\alpha_{1/2}=(\alpha_1, 1/2)^t$, and $\beta_{1/2}=(\beta_1, 1/2)^t$:
\begin{equation*}
\eta^6(\tau)=
e^{2\pi i\alpha_{1/2} \cdot \beta_{1/2} }
\frac{ \left(K^{(1)}(z, \tau) \right)^4 }
{\vartheta^{(2)}\left[ {\alpha_{1/2} \atop \beta_{1/2}}\right] (\Omega^{(2)} )}
\det \left(I-A_1 A_2 \right)^{1/2}  \det  \left(I - Q^{(1)}
\right),
\end{equation*}
where one can express the genus one prime form in an alternative form \cite{Fay}.

Next, we prove the following result:
\begin{proposition}
For $n\ge 1$, $w \in {\mathbb C}$, $a=1, 2$, a genus two formal generalization of the Garvan's formula has the form:
\begin{align}
\eta^{3\kappa^2}(\tau)
&=
\frac{
e^{-2 i\pi \beta_2 \kappa}
\vartheta^{(2)}
\begin{bmatrix}
\alpha \\ \beta
\end{bmatrix}
\left( \Omega^{(2)} \right)
\vartheta^{(1)}
\left[
\begin{array}{c}
\frac{1}{2} \\
\frac{1}{2}
\end{array}
\right] (w, \tau)^{\kappa^2}
\det \left[ S^{(2)}_n\begin{bmatrix}
\alpha \\ \beta
\end{bmatrix} ({\bf x}, {\bf y}) \right]
}
{ \left( -e^{i\pi B } \rho\right)^{\half \kappa^2}  \vartheta^{(1)}
\begin{bmatrix}
\alpha_a \\ \beta_a
\end{bmatrix}
\left(\kappa w, \tau \right)
\det \left(I - R\right)^{\half}
\det \left[
\begin{array}{cc}
S_{\kappa,n}^{(2)} & -\xi  HD(\theta_2)
\\
\overline{H}^t & I - T
\end{array}
\right]
}.
\end{align}
\end{proposition}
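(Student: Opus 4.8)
The plan is to apply at genus two the same two-step template that produced the genus one Garvan formulas: compare the fermionic and bosonic representations of the partition function $Z^{(2)}[{f\atop g}]$ of the rank two free fermion vertex operator superalgebra, and isolate the required power of $\eta$ by a momentum-$\kappa$ self-sewing combined with the genus two analogue of Fay's generalized trisecant identity. The point of departure is the self-sewing comparison displayed immediately above, namely the identity for $\eta^6(\tau)$ obtained on setting $\tau=\tau_1=\tau_2$; this is the special case in which the sewing carries no momentum. First I would promote it to a $\kappa$-dependent family by passing to the momentum-$\kappa$ sector of the sphere self-sewing. As in the genus one Jacobi triple product rewriting $\eta(\tau)=q^{-\kappa^2/2+1/24}e^{2\pi i(\alpha+1/2)(\beta+1/2)}\vartheta^{(1)}[\ldots](0,\tau)/\det(I-T^{(1)})$, this sector contributes a factor $K^{(1)}(w,\tau)^{\kappa^2}$ at the sewing point $w$, together with the scalar $(-e^{i\pi B}\rho)^{\kappa^2/2}$ and the phase $e^{-2i\pi\beta_2\kappa}$ arising from the quasi-periodicity of $\vartheta^{(2)}[{\alpha\atop\beta}]$ and of $\vartheta^{(1)}[{\alpha_a\atop\beta_a}]$ under the shift by $\kappa w$.

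Next I would insert $n$ pairs of Szeg\H{o} kernel vertex operators at the points ${\bf x}$, ${\bf y}$. On the bosonic side these produce $\vartheta^{(2)}[{\alpha\atop\beta}](\Omega^{(2)})$ times the Szeg\H{o} determinant $\det[S^{(2)}_n[{\alpha\atop\beta}]({\bf x},{\bf y})]$, precisely the combination in the numerator. On the fermionic side the same insertions, restricted to the $\kappa$-sector, assemble into the block sewing determinant in Schur complement form: the principal block $S^{(2)}_{\kappa,n}$ carries the Szeg\H{o} data, the block $I-T$ records the sphere-to-sphere identification, and the coupling blocks $-\xi HD(\theta_2)$ and $\overline{H}^t$ encode the matching of the two boundary circles, while the remaining factor $\det(I-R)^{1/2}$ is the genus two counterpart of the $\det(I-A_1A_2)^{1/2}$ of the bosonized $Z^{(2)}_M$.

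The decisive input is the genus two analogue of the generalized trisecant identity, the determinant formula underlying the genus two Jacobi triple product of [TZ1]; it expresses $\det[S^{(2)}_n]$ as a ratio of theta functions times a product of genus one prime forms $K^{(1)}$, in complete parallel with the evaluations of $\det{\bf P}_n$ and $\det{\bf Q}_n$ used above. Substituting it, equating the two representations of $Z^{(2)}$, and applying the prime form normalization $K^{(1)}(w,\tau)=-\frac{i}{\eta^3(\tau)}\vartheta^{(1)}[{1/2\atop1/2}](w,\tau)$ to the momentum factor $K^{(1)}(w,\tau)^{\kappa^2}$ converts it into $\eta^{3\kappa^2}(\tau)\,\vartheta^{(1)}[{1/2\atop1/2}](w,\tau)^{\kappa^2}$ up to a phase that is absorbed, with the sewing normalization, into $(-e^{i\pi B}\rho)^{\kappa^2/2}$; collecting the surviving theta and eta factors then yields the displayed identity.

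The main obstacle will be the bookkeeping in the Schur complement step: one must verify that the coupling blocks $-\xi HD(\theta_2)$ and $\overline{H}^t$ together with $I-T$ reproduce exactly the genus two trisecant ratio upon elimination of the sewing block, including the correct power of the sewing variable $\xi$ and the momentum shift inside $\vartheta^{(1)}[{\alpha_a\atop\beta_a}](\kappa w,\tau)$. This is a careful but purely algebraic check of how the self-sewing relations on the sphere assemble into the genus two period matrix $\Omega^{(2)}$; it requires no analytic input beyond the trisecant identity and the prime form normalization already recorded.
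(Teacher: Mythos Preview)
Your broad strategy---compare the fermionic and bosonic genus two partition functions in the self-sewing scheme, eliminate the sewing determinant via a block (Schur complement) identity, and extract the power of $\eta$ through the prime form normalization $K^{(1)}(w,\tau)=-\frac{i}{\eta^3(\tau)}\vartheta^{(1)}\bigl[{1/2\atop1/2}\bigr](w,\tau)$---is the same as the paper's. However, the paper's proof is considerably more direct than your outline: it does not re-derive anything by inserting vertex operators, nor does it invoke a genus two trisecant identity. It simply quotes two ready-made results from [TZ2]: (i) the $\rho$-self-sewing genus two Jacobi triple product, which already contains the factors $e^{2i\pi\beta_2\kappa}$, $(e^{i\pi B}\rho/K^{(1)}(w,\tau)^2)^{\kappa^2/2}$, $\det(I-T^{(2)})$ and $\det(I-R)^{1/2}$; and (ii) the block determinant identity
\[
\det\begin{bmatrix} S_{\kappa,n}^{(2)} & -\xi HD(\theta_2)\\ \overline{H}^t & I-T^{(2)}\end{bmatrix}
=\det\bigl[S^{(2)}_n\bigl[{\alpha\atop\beta}\bigr]({\bf x},{\bf y})\bigr]\,\det(I-T^{(2)}).
\]
Solving the second for $\det(I-T^{(2)})$, substituting into the first, and applying the prime form normalization gives the statement immediately.

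Two points of confusion in your proposal are worth flagging. First, the $\eta^6(\tau)$ identity you take as ``the self-sewing comparison displayed immediately above'' is in fact the $\epsilon$-sewing formula (two tori with moduli $\tau_1,\tau_2$ glued together, then specialized to $\tau_1=\tau_2$), not the $\rho$-self-sewing of a single torus; the two schemes are distinct, and Proposition~3 lives entirely in the $\rho$-scheme, so the relevant Jacobi triple product is the one from [TZ2], not [TZ1]. Second, the ``main obstacle'' you anticipate---verifying that the Schur complement blocks assemble correctly---is precisely the content of the cited [TZ2] block determinant formula, so no bookkeeping is required here: the identity is imported wholesale, and the proof reduces to a two-line algebraic substitution.
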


\begin{proof}
In \cite{TZ2}, by computing the genus two partition function for the fermionic vertex operator algebra and performing the bosonization, we found a genus two analogue of the classical Jacobi triple product identity:
\begin{equation*}
\frac{
\vartheta^{(2)}
\begin{bmatrix}
\alpha \\ \beta
\end{bmatrix}
\left( \Omega^{(2)}\right)
}
{
\vartheta^{(1)}
\begin{bmatrix}
\alpha_a \\ \beta_a
\end{bmatrix}
\left(\kappa w, \tau \right)
}
=
e^{2 i\pi \beta_2\kappa}
\left(\frac{e^{i\pi B }\rho}{ K^{(1)} (w, \tau)^2 }\right)^{\half \kappa^2}
\det \left(I - T^{(2)} \right) \det \left(I - R \right)^\half.
\end{equation*}
Here $\rho$ is the torus self-sewing complex parameter, $-1/2 < \kappa <1/2$, $B$ is an odd integer parameterizing the formal branch cut, $T^{(2)}=\xi G^{(2)}D(\theta_2)$, and $\xi\in\{\pm \sqrt{-1}\}$.
The matrix $R$ is defined by:
\[
R_{ab}(k,l)=-\frac{\rho^{(k+l)/2}}{\sqrt{kl}}
\begin{bmatrix}
D^{(1)}(k,l,w, \tau) & C^{(1)}(k,l, \tau) \\
C^{(1)}(k,l,\tau ) & D^{(1)}(l,k, w, \tau)
\end{bmatrix},
\]
with
\[
D(\theta_2)(k,l)=
\left[
\begin{array}{cc}
\theta_2^{-1} & 0 \\
0 & -\theta_2
\end{array}
\right]
\delta(k,l),
\]
\begin{align*}
C^{(1)}(k,l,\tau) &= (-1)^{k+1}\frac{(k+l-1)!}{(k-1)!(l-1)!} E^{(1)}_{k+l}(\tau), \\
D^{(1)}(k,l, z, \tau) &= (-1)^{k+1}\frac{(k+l-1)!}{(k-1)!(l-1)!} P^{(1)}_{k+l}(\tau, z).
\end{align*}
The infinite diagonal matrix is:
\[
G^{(2)}_{ab}=\left[
\frac{ \rho^{\half (k_a+l_b-1)} } { (2\pi i)^2 }
\oint_{\mathcal{C}_{{\overline{a}}}(x_{\overline{a}})}
\oint_{\mathcal{C}_b(y_b)}
(x_{\overline{a}})^{-k_a} (y_b)^{-l_b}
S^{(2)}_\kappa (x_{\overline{a}}, y_b)
\;
dx_{\overline{a}}^{\half } \; dy_b^\half
\right].
\]
The genus two Szeg\H{o} kernel for $x, y$ taken on the torus is given by \cite{Tuite-Z}:
\begin{align}
\label{eq:Szego_Genus2}
S^{(2)}(x,y) &=S^{(2)}_{\kappa}(x,y) +\xi h(x)D(\theta)
\left(I-T^{(2)}\right)^{-1} \overline{h}^t(y) \nonumber \\
&= \left((x-y)^{-1}
+ \sum\limits_{k,l} E^{(2)}_{k,l}\left(\Omega^{(2)} \right)x^{-k}y^l
\right) dx^\half dy^\half,
\end{align}
where $\overline{h}^t(y)$ denotes the transpose to:
\[
\overline{h}(y)=\left(
\frac{\rho^{\half (k_a -\half )}}{2\pi i}
\oint_{\mathcal{C}_a(y_a)}
y_a^{-k_a}
S^{(2)}_\kappa (x, y_a) dy_a^\half\right),
\]
and semi-infinite matrices $H=\left( \left(h(x_i)\right) (k,a) \right)$, $\overline{H}^t=\left(\left( \overline{h} (y_i) \right) (l,b) \right)^t$. The rows of $H$ are indexed by $i$ and columns by $k\ge 1$ and $a=1, 2$. $\overline{H}^t$ is semi-infinite with rows indexed by $l\ge 1$ and $b=1, 2$, and with $n$ columns indexed by $j$.

We also introduce the matrices:
\[
S^{(2)}_\kappa(x,y)=
\left(
\frac{
\vartheta^{(1)}
\begin{bmatrix}
\half
\\
\half
\end{bmatrix} (x-w,\tau) \vartheta^{(1)}
\begin{bmatrix}
\half
\\
\half
\end{bmatrix} (y,\tau)}
{\vartheta^{(1)}
\begin{bmatrix}
\half
\\
\half
\end{bmatrix} (x,\tau)  \vartheta^{(1)}
\begin{bmatrix}
\half
\\
\half
\end{bmatrix} (y-w,\tau)}
\right)^{\kappa}\,
\frac{ \vartheta^{(1)}  \begin{bmatrix}{\alpha_1} \\ {\beta_1} \end{bmatrix}
\left( x-y +\kappa w,\tau\right) dx^\half dy^\half}
{\vartheta^{(1)} \begin{bmatrix}{\alpha_1 } \\ {\beta_1}\end{bmatrix}
\left(\kappa w,\tau \right)  K^{(1)}(x-y,\tau)},
\]
\[
S^{(2)}_n({\bf x}, {\bf y})=\left[S^{(2)}(x_i, y_j)\right],
\quad
S_{\kappa, n}^{(2)}({\bf x}, {\bf y}) =\left[S^{(2)}_{\kappa}(x_i,y_j)\right],
\]
which are finite matrices with $1 \le i, j \le n$.

In \cite{TZ2}, we also proved the following formula:
\begin{equation*}
\det \left[
\begin{array}{cc}
S_{\kappa, n}^{(2)} & -\xi  HD(\theta_2)
\\
\overline{H}^t & I - T^{(2)}
\end{array}
\right] =
\det \left[ S^{(2)}_n
\begin{bmatrix}
\alpha \\ \beta
\end{bmatrix}
({\bf x}, {\bf y}) \right]
\det \left(I-T^{(2)} \right).
\end{equation*}
Thus, we obtain the result.
\end{proof}

Due to \eqref{eq:Szego_Genus2}, this Proposition expresses the modular discriminant in terms of deformed Eisenstein series.

\section{Discussion}

In this review, we give an explicit vertex operator algebra explanation of 
the Garvan-Milne identities \cite{Milne}.
The matrix $S_n^{(2)}$ appearing in the numerator is the genus two analog 
of the matrix $\begin{pmatrix} E_4 & E_6 \\ E_6 & E_8 \end{pmatrix}$ in 
Garvan's classical formula. Its entries are generalized Eisenstein series on 
the genus two surface. Unlike the genus one case, where $\eta(\tau)^{24}$ is 
a simple modular form, the genus two analog involves the modular form 
$\Delta_{10}(\Omega^{(2)})$ (the Igusa cusp form of weight 10). The formula 
derived here provides a representation of the correlation functions 
(and effectively the discriminant in the degeneration limit) as a determinant of 
deformed Eisenstein series on the genus two surface. This result establishes 
a direct pathway to generate identities for genus two modular forms by 
computing determinants of Szeg\H{o} kernels, generalizing the method of 
Garvan and Milne to higher genus Riemann surfaces.
The higher genus generalizations are applicable also 
in the theory of foliations \cite{Zu3, Zu1, Zu}. 

The classical formulas generalizations developed in this paper  
turn very effective in computations related to mathematical physics,   
and condensed matter theory. 
In particular, they are useful in 
descriptions of integer quantum Hall effect \cite{z9}, 
non-perturbative and various topological defects dominates dynamics \cite{z8}, 
fermionic superfluids \cite{z7},
relation between solid state systems and high energy physics \cite{z4, z5, z6}, 
chiral separation effect {z3}, and Wigner-Weyl calculus \cite{z1, z2}.  
\section*{Acknowledgments}
The last author is supported by the Institute of Mathematics of the Academy of 
Sciences of the Czech Republic (RVO 67985840). The last author would like to thank 
G. Mason and M. Tuite for useful discussions.

\end{document}